\hfill \footnotesize {\Small\rm  M. Bavand Savadkouhi and M.
Eshaghi Gordji } \hfill
\hfill \footnotesize {\rm  Stability of cubic and quartic
functional equations in non-Archimedean spaces}
\begin{document}
\thispagestyle{empty}
 \setcounter{page}{1}

{\Large \bf  \begin{center} Stability of cubic and quartic
functional equations in non-Archimedean spaces
\end{center} }

\vskip.20in
\begin{center}

\vskip.30in

{\Small\bf  M. Bavand Savadkouhi and M. Eshaghi Gordji } \\[2mm]

{\footnotesize Department of Mathematics,
Semnan University,\\ P. O. Box 35195-363, Semnan, Iran\\
[-1mm] e-mail: {\tt bavand.m@gmail.com \& madjid.eshaghi@gmail.com}}

\end{center}
\vskip 5mm \noindent{\footnotesize{\bf Abstract.} We prove
generalized Hyres-Ulam-Rassias stability of the cubic functional
equation $f(kx+y)+f(kx-y)=k[f(x+y)+f(x-y)]+2(k^3-k)f(x)$ for all
$k\in \Bbb N$ and the quartic functional equation
$f(kx+y)+f(kx-y)=k^2[f(x+y)+f(x-y)]+2k^2(k^2-1)f(x)-2(k^2-1)f(y)$
for all $k\in \Bbb N$ in non-Archimedean normed spaces. \vskip.10in
\footnotetext {2000 Mathematics Subject Classification. 39B22,
39B82, 46S10.} \footnotetext {Keywords: Generalized
Hyers-Ulam-Rassias stability; Cubic functional equation; Quartic
functional equation; Non-Archimedean space; p-adic.} \vskip.10in

  \newtheorem{df}{Definition}[section]
  \newtheorem{rk}[df]{Remark}
   \newtheorem{lem}[df]{Lemma}
   \newtheorem{thm}[df]{Theorem}
   \newtheorem{pro}[df]{Proposition}
   \newtheorem{cor}[df]{Corollary}
   \newtheorem{ex}[df]{Example}

 \setcounter{section}{0}
 \numberwithin{equation}{section}

\vskip .2in

\begin{center}
\section{Introduction}
\end{center}
A classical question in the theory of functional equations is the
following: "When is it true that a function which approximately
satisfies a functional equation $\epsilon$ must be close to an
exact solution of $\epsilon ?$ "\\ If the problem accepts a
solution, we say that the equation $\epsilon$ is stable. The first
stability problem concerning group homomorphisms was raised by
Ulam \cite{Ul} in $1940.$\\
We are given a group $G$ and a metric group $G^{'}$ with metric
$d(.,.).$ Given $\epsilon>0,$ does there exist a $\delta>0$ such
that if $f:G \to G^{'}$ satisfies $d(f(xy),f(x)f(y))< \delta$ for
all $x,y \in G,$ then a homomorphism $h:G \to G^{'}$ exists with
$d(f(x),h(x))< \epsilon$ for all $x\in G?$\\
Ulam's problem was partially solved by Hyers \cite{Hy} in $1941.$
Let $E_1$ be a normed space, $E_2$ a Banach space and suppose that
the mapping $f:E_1 \to E_2$ satisfies the inequality $$\|f(x+y)-
f(x)-f(y)\| \leq \epsilon \hspace{2cm}(x,y \in E_1),$$ where
$\epsilon>0$ is a constant. Then the limit $T(x)=\lim_{n \to
\infty}2^{-n}f(2^nx)$ exists for each $x \in E_1$ and $T$ is the
unique additive mapping satisfying  $$\|f(x)-T(x)\|\leq \epsilon
\eqno(1.1)$$ for all $x \in E_1.$ Also, if for each $x$ the function
$t \longmapsto f(tx)$ from $\Bbb R$ to $E_2$ is continuous on $\Bbb
R,$ then $T$ is linear. If $f$ is continuous at a single point of
$E_1,$ then $T$ is continuous everywhere in $E_1.$
Moreover $(1.1)$ is sharp.\\
In $1978,$ Th. M. Rassias \cite{TRa1} formulated and proved the
following theorem, which implies Hyers' theorem as a special case.
Suppose that $E$ and $F$ are real normed spaces with $F$ a complete
normed space, $f:E \to F$ is a mapping such that for each fixed $x
\in E$ the mapping $t \longmapsto f(tx)$ is continuous on $\Bbb R,$
and let there exist $\epsilon > 0$ and $p \in [0, 1)$ such that
$$\|f(x+y)-f(x)-f(y)\| \leq \epsilon (\|x\|^p+\|y\|^p)\eqno(1.2)$$
for all $x,y \in E.$ Then there exists a unique linear mapping $T:E
\to F$ such that
$$\|f(x)-T(x)\| \leq \frac{\epsilon \|x\|^p}{(1 - 2^{p-1})}$$ for all $x \in E.$ The case of the
existence of a unique additive mapping had been obtained by $T.$ The
terminology Hyers-Ulam stability originates from these historical
backgrounds. The terminology can also be applied to the case of
other functional equations. For more detailed definitions of such
terminologies, we can refer to \cite{Ga},\cite{Gr},\cite{Hy-Is-Ra}
and \cite{Ju}. In $1994,$ P. G\v avruta \cite{G} provided a further
generalization of Th. M. Rassias' theorem in which he replaced the
bound $\epsilon(\|x\|^p +\|y\|^p)$ in $(1.2)$ by a general control
function $\varphi(x,y)$
for the existence of a unique linear mapping.\\
The functional equation $f(x+y)+f(x-y)=2f(x)+2f(y)$ is called the
quadratic functional equation. In particular, every solution of the
quadratic functional equation is said to be a quadratic mapping, see
\cite{R1, TRa2}. A generalized Hyers-–Ulam stability problem for the
quadratic functional equation was proved by Skof \cite{Sk} for
mappings $f:X \to Y,$ where $X$ is a normed space and $Y$ is a
Banach space. Cholewa \cite{Ch} noticed that the theorem of Skof is
still true if the relevant domain $X$ is replaced by an Abelian
group. In \cite{Cz}, Czerwik proved the generalized Hyers–-Ulam
stability of the quadratic functional equation. Borelli and Forti
\cite{Bo-Fo} generalized the stability result as follows (cf.
\cite{Pa1,Pa2}): Let $G$ be an Abelian group, and $X$ a Banach
space. Assume that a mapping $f:G \to X$ satisfies the functional
inequality $$\|f(x+y)+f(x-y) -2f(x)-2f(y)\| \leq \varphi(x,y)$$ for
all $x,y \in G,$ and $\varphi:G \times G \to [0,\infty)$ is a
function such that
$$\Phi(x,y):=\sum_{i=0}^{\infty}\frac{1}{4^{i+1}} \varphi(2^ix,2^iy)< \infty$$ for all $x,y \in G.$ Then
there exists a unique quadratic mapping $Q: G \to X$ with the
property  $\|f(x)-Q(x)\| \leq \Phi(x,x)$ for all $x \in G.$\\ Jun
and Kim \cite{Ju-Ki} introduced the following cubic functional
equation
$$f(2x+y)+f(2x-y)=2f(x+y)+2f(x-y)+12f(x)\eqno (1.3)$$ and they
established the general solution and the generalized
Hyers-Ulam-Rassias stability for the functional equation $(1.3).$
The function $f(x)=x^3$ satisfies the functional equation $(1.3),$
which is thus called a cubic functional equation. Every solution
of the cubic functional equation is said to be a cubic function.
Now we introduce the following cubic functional equation
$$f(kx+y)+f(kx-y)=k[f(x+y)+f(x-y)]+2(k^3-k)f(x) \eqno(1.4)$$ for
all $k\in \Bbb N.$ \\ In \cite{Pa-Ba}, Won-Gil Park and Jae-Hyeong
Bae, introduced the following functional equation
$$f(x+2y)+ f(x-2y)=4[[f(x+y)+f(x-y)]+24f(y)-6f(x)\eqno (1.5)$$ and
they established the general solution of the functional equation
$(1.5).$ It is easy to see that the function $f(x)=cx^4$ is a
solution of the functional equation $(1.5).$ Thus, it is natural
that $(1.5)$ is called a quartic functional equation and every
solution of the quartic functional equation is said to be a quartic
mapping. In this paper, we introduce the following quartic
functional equation
$$f(kx+y)+f(kx-y)=k^2[f(x+y)+f(x-y)]+2k^2(k^2-1)f(x)-2(k^2-1)f(y) \eqno(1.6)$$
for all $k\in \Bbb N$. For more detailed definitions of such
terminologies, we can refer to \cite{Na}.\\
By a non-Archimedean field we mean a field $K$ equipped with a
function (valuation) $|.|$ from $K$ into $[0,\infty)$ such that
$|r|= 0$ if and only if $r=0,$ $|rs|=|r||s|,$ and $|r+s| \leq \max
\{|r|, |s|\}$ for all $r,s \in K.$ Clearly $|1|=|-1|=1$ and $|n|
\leq 1$ for all $n \in N.$\\
\begin{df}Let $X$ be a vector space over a scalar field $K$ with a
non-Archimedean non-trivial valuation $|.|.$ A function $\|.\|:X
\to \Bbb R$ is a non-Archimedean norm (valuation) if it satisfies
the following conditions:\\ $(i)$ $\|x\|=0$ if and only if
$x=0;$\\
$(ii)$ $\|rx\|=|r| \|x\| $ for all $r \in  K,$ $x \in X;$\\
$(iii)$ the strong triangle inequality (ultrametric); namely,
$$\|x+y\| \leq \max \{ \|x\|,\|y\| \}.$$
for all $x,y \in X.$ Then $(X,\|.\|)$ is called a non-Archimedean
space.
\end{df}
Due to the fact that $$\|x_n-x_m\| \leq \max \{\|x_{j+1}-x_{j}\| :
m \leq j \leq n-1 \} \hspace{1cm} (n> m)$$ a sequence $\{x_n\}$ is
Cauchy if and only if $\{x_{n+1}-x_n \}$ converges to zero in a
non-Archimedean space. By a complete non-Archimedean space we mean
one in which every Cauchy sequence is convergent.\\
In $1897,$ Hensel  \cite{He} discovered the p-adic numbers as a
number theoretical analogue of power series in complex analysis.
Fix a prime number $p.$ For any nonzero rational number $x,$ there
exists a unique integer $n_x \in \Bbb Z$ such that $x=\frac{a}{b}
p^{n_x},$ where $a$ and $b$ are integers not divisible by $p.$
Then $|x|_p:=p^{-n_x}$ defines a non-Archimedean norm on $\Bbb Q.$
The completion of $\Bbb Q$ with respect to the metric
$d(x,y)=|x-y|_p$ is denoted by $\Bbb Q_p$ which is called the
p-adic number field. In fact, $\Bbb Q_p$ is the set of all formal
series $x=\sum_{k \geq n_x}^{\infty}a_kp^k,$ where $|a_k|\leq p-1$
are integers. The addition and multiplication between any two
elements of $\Bbb Q_p$ are defined naturally. The norm $|\sum_{k
\geq n_x}^{\infty}a_kp^k|_p=p^{-n_x}$ is a non-Archimedean norm on
$\Bbb Q_p$ and it makes $\Bbb Q_p$ a locally compact field; see
\cite{Go,Ro}.\\
During the last three decades p-adic numbers have gained the
interest of physicists for their research in particular in problems
coming from quantum physics, p-adic strings and superstrings (cf.
\cite{Kh}). A key property of p-adic numbers is that they do not
satisfy the Archimedean axiom: for all $x,y > 0,$ there exists an
integer $n$ such that $x< ny.$ "It is very difficult to imagine a
situation where this axiom does not hold, but in fact the very space
and time we inhabit have both been shown by $20$th century science
to be unequivocally non-Archimedean: The Archimedean axiom breaks
down at the Planck scale, that is, for distances less than $1.6
\times 10^{-33}$ metres and durations less than $5.4 \times
10^{-44}$ seconds. Despite our entrenched belief that space and time
are continuous, homogeneous, infinitely divisible quantities, we are
now confronted with the fact that below this scale, distances and
durations cannot scaled up in order to produce macroscopic distances
and durations. Equivalently, we cannot meaningfully measure
distances or durations below this scale. So a suggestion emerges to
abandon the Archimedean axiom at very small distances. This leads to
a non-Euclidean and non-Riemannian geometry of space
at small distances"; cf. \cite{Vl-Vo-Ze}.\\
In \cite{Ar-Be}, the authors investigated stability of approximate
additive mappings $f:\Bbb Q_p \to \Bbb R.$ They showed that if $f:
\Bbb Q_p \to \Bbb R$ is a continuous mapping for which there
exists a fixed $\epsilon$ such that $|f(x+y)-f(x)-f(y)| \leq
\epsilon$ for all $x,y \in Q_p,$ then there exists a unique
additive mapping $T:\Bbb Q_p \to \Bbb R$ such that
$|f(x)-T(x)|\leq \epsilon$ for all $x \in \Bbb Q_p.$\\
M. S. Moslehian and Th. M. Rassias \cite{Mo-Ra} proved the
generalized Hyers-Ulam stability of the Cauchy functional equation
and the quadratic functional equation in non-Archimedean normed
spaces.\\
In this paper, we solve the stability problem for cubic and quartic
functional equations when the unknown function is one with values in
a non-Archimedean space, in particular in the field of p-adic
numbers. \vskip .2in
\section {{\Large\bf} Stability of the cubic functional equation}
In this section, we prove the generalized Hyers-Ulam-Rassias
stability of the cubic functional equation $(1.4)$. Throughout this
section, we assume that $G$ is an additive group and $X$ is a
complete non-Archimedean space. Now before taking up the main
subject, given $f:G \to X,$ we define the difference operator
$$Df(x,y)=f(kx+y)+f(kx-y)-k[f(x+y)+f(x-y)]-2(k^3-k)f(x)$$ for all
$x,y \in G$ and all $k \in \Bbb N.$ We consider the following
functional inequality:
$$\|Df(x,y)\| \leq \varphi(x,y)$$
for an upper bound $\varphi:G \times G \to [0, \infty).$
\begin{thm}
Let $\varphi:G \times G \to [0,\infty)$ be a function such that
$$\lim_{n \to \infty} \frac{\varphi(k^nx,k^ny)}{|k|^{3n}}=0 \eqno(2.1)$$
for all $x,y \in G,$ $k \in \Bbb N$ and let for each $x \in G$ the
limit
$$\lim_{n \to \infty} \max \{\frac{\varphi(k^jx,0)}{|k|^{3j}}:~0 \leq j < n \},\eqno(2.2)$$
denoted by $\tilde{\varphi}(x),$ exists. Suppose that $f:G \to X$
is a mapping satisfying
$$\|Df(x,y)\| \leq \varphi(x,y) \eqno(2.3)$$
for all $x,y \in G.$ Then there exists a cubic mapping $C:G \to X$
such that
$$\|C(x)-f(x)\| \leq \frac{1}{|2k^3|} \tilde{\varphi}(x) \eqno(2.4)$$
for all $x \in G,$ if $$\lim_{i \to \infty} \lim_{n \to \infty}
\max \{\frac{\varphi(k^jx,0)}{|k|^{3j}}:~i \leq j < n+i \}=0$$
then $C$ is the unique cubic mapping satisfying $(2.4).$
\end{thm}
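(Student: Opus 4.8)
The plan is to run the classical direct (Hyers) method, adapted to the ultrametric setting. \textbf{Step 1 (a one–step estimate).} Put $y=0$ in $(2.3)$. Since the exact cubic equation forces $2f(kx)=2k^{3}f(x)$, this gives $\|2f(kx)-2k^{3}f(x)\|\le\varphi(x,0)$, and dividing by $|2k^{3}|=|2|\,|k|^{3}$ yields $\bigl\|k^{-3}f(kx)-f(x)\bigr\|\le\varphi(x,0)/|2k^{3}|$ for all $x\in G$. Setting $g_{n}(x):=k^{-3n}f(k^{n}x)$ and applying the last inequality with $x$ replaced by $k^{n}x$, together with $\|rz\|=|r|\,\|z\|$, we obtain
$$\|g_{n+1}(x)-g_{n}(x)\|\le\frac{1}{|2k^{3}|}\cdot\frac{\varphi(k^{n}x,0)}{|k|^{3n}}\qquad(n\ge0,\ x\in G).$$

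\textbf{Step 2 (convergence and $(2.4)$).} By $(2.1)$ with $y=0$ the right–hand side above tends to $0$, so by the Cauchy criterion recalled in the introduction the sequence $\{g_{n}(x)\}$ converges in the complete non-Archimedean space $X$; define $C(x):=\lim_{n\to\infty}g_{n}(x)$. Since $g_{0}=f$, applying the strong triangle inequality to $g_{n}(x)-f(x)=\sum_{j=0}^{n-1}(g_{j+1}(x)-g_{j}(x))$ gives $\|g_{n}(x)-f(x)\|\le\max_{0\le j<n}\|g_{j+1}(x)-g_{j}(x)\|$, hence
$$\|g_{n}(x)-f(x)\|\le\frac{1}{|2k^{3}|}\max_{0\le j<n}\frac{\varphi(k^{j}x,0)}{|k|^{3j}};$$
letting $n\to\infty$, using continuity of $\|\cdot\|$ on the left and $(2.2)$ on the right, produces exactly $(2.4)$.

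\textbf{Step 3 ($C$ is cubic).} A direct computation from the definition of the difference operator shows $Dg_{n}(x,y)=k^{-3n}Df(k^{n}x,k^{n}y)$, so $\|Dg_{n}(x,y)\|\le|k|^{-3n}\varphi(k^{n}x,k^{n}y)$. Passing to the limit (addition and scalar multiplication are continuous) and using the \emph{full} hypothesis $(2.1)$ gives $DC(x,y)=0$ for all $x,y\in G$, i.e. $C$ satisfies $(1.4)$ and is therefore a cubic mapping.

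\textbf{Step 4 (uniqueness) and the main obstacle.} Suppose $C'$ is another cubic mapping satisfying $(2.4)$. Putting $y=0$ in $(1.4)$ and iterating shows every cubic map $h$ obeys $h(k^{n}x)=k^{3n}h(x)$; hence, using $(2.4)$ for both $C$ and $C'$ and the strong triangle inequality,
$$\|C(x)-C'(x)\|=\frac{1}{|k|^{3n}}\|C(k^{n}x)-C'(k^{n}x)\|\le\frac{1}{|k|^{3n}}\max\bigl\{\|C(k^{n}x)-f(k^{n}x)\|,\ \|f(k^{n}x)-C'(k^{n}x)\|\bigr\},$$
and the right side is at most $|2k^{3}|^{-1}|k|^{-3n}\tilde{\varphi}(k^{n}x)$. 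Rewriting $|k|^{-3n}\tilde{\varphi}(k^{n}x)=\lim_{m\to\infty}\max\{\varphi(k^{j}x,0)/|k|^{3j}:\ n\le j<m+n\}$ and letting $n\to\infty$, the last displayed hypothesis of the theorem forces $\|C(x)-C'(x)\|=0$, so $C=C'$. The algebraic identities for $Df$ and $Dg_{n}$ are routine; the one genuinely non-Archimedean point needing care is the bookkeeping with the nested $\max$/limits — namely checking that $(2.2)$ makes $\tilde{\varphi}$ well defined and that the final hypothesis is precisely what kills $|k|^{-3n}\tilde{\varphi}(k^{n}x)$ in the uniqueness step, in place of the geometric-series estimate used in the Archimedean proof.
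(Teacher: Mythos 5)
Your proposal is correct and follows essentially the same route as the paper: the same one-step estimate from $y=0$, the same telescoping/ultrametric Cauchy argument for the sequence $k^{-3n}f(k^{n}x)$, the same passage to the limit for $(2.4)$ and for $DC=0$, and the same rescaling argument for uniqueness. Your explicit rewriting of $|k|^{-3n}\tilde{\varphi}(k^{n}x)$ as the shifted limit in the uniqueness step is a useful clarification of what the paper leaves implicit, but it is not a different proof.
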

\begin{proof}
Putting $y=0$ in $(2.3),$ we get
$$\|\frac{1}{k^3}f(kx)-f(x)\| \leq \frac{1}{|2k^3|} \varphi(x,0)\eqno(2.5)$$
for all $x \in G$ and all $k \in \Bbb N.$ Replacing $x$ by
$k^{n-1}x$ in $(2.5),$ we get
$$\|\frac{1}{k^{3n}}f(k^nx)-\frac{1}{k^{3(n-1)}}f(k^{n-1}x)\| \leq \frac{\varphi(k^{n-1}x,0)}{|2k^{3n}|}\eqno(2.6)$$
for all $x \in G$ and all $k \in \Bbb N.$ It follows from $(2.6)$
and $(2.1)$ that the sequence $\{ \frac{f(k^nx)}{k^{3n}} \}$ is
Cauchy. Since $X$ is complete, we conclude that $\{
\frac{f(k^nx)}{k^{3n}} \}$ is convergent. Set $C(x):=\lim_{n \to
\infty} \frac{f(k^nx)}{k^{3n}}.$ Using induction one can show that
$$\| \frac{f(k^nx)}{k^{3n}}-f(x)\| \leq  \frac{1}{|2k^3|} \max
\{\frac{\varphi(k^ix,0)}{|k|^{3i}}:~ 0 \leq i < n \} \eqno(2.7)$$
for all $n \in \Bbb N$ and all $x \in G.$ By taking $n$ to approach
infinity in $(2.7)$ and using $(2.2)$, we  obtain $(2.4).$ By
$(2.1)$ and $(2.3),$ we get
$$\|DC(x,y)\|=\lim_{n \to \infty}\frac{1}{|k^{3n}|}\|f(k^nx,k^ny)\| \leq \lim_{n \to \infty} \frac{\varphi(k^nx,k^ny)}{|k|^{3n}}=0$$
for all $x,y \in G$ and all $k \in \Bbb N.$ Therefore the mapping
$C:G \to X$ satisfies $(1.4).$ If $C^{'}$ is another cubic mapping
satisfying $(2.4),$ then
\begin{align*}
\|C(x)-C^{'}(x)\|&=\lim_{i \to
\infty}|k|^{-3i}\|C(k^ix)-C^{'}(k^ix)\|\\
& \leq \lim_{i \to \infty} |k|^{-3i} \max \{~
\|C(k^ix)-f(k^ix)\|,\|f(k^ix)-C^{'}(k^ix)\|~ \}\\
&\leq \frac{1}{|2k^3|} \lim_{i \to \infty} \lim_{n \to \infty}
\max \{ \frac{\varphi(k^jx,0)}{|k|^{3j}}:~i \leq j < n+i \}\\
&=0.
\end{align*}
for all $x \in G$ and all $k \in \Bbb N.$ Therefore $C=C^{'}.$
This completes the proof of the uniqueness of $C.$
\end{proof}
\begin{cor}
Let $\alpha:[0,\infty) \to [0,\infty)$ be a function satisfying\\
$(i)$ $\alpha(|k|t) \leq \alpha(|k|) \alpha(t)$ for all $t \geq
0,$\\ $(ii)$ $\alpha(|k|) < |k|^3.$\\ Let $\delta > 0,$ let $G$ be
a normed space and let $f:G \to X$ fulfill the inequality
$$\|Df(x,y)\| \leq \delta [\alpha (\|x\|)+\alpha (\|y\|)]$$ for all
$x,y \in G.$ Then there exists a unique cubic mapping $C:G \to X$
such that
$$\|f(x)-C(x)\| \leq \frac{1}{|2k^3|} \delta \alpha (\|x\|) \eqno(2.8)$$ for all $x \in G$ and all $k \in \Bbb N.$
\end{cor}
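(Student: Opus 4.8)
The plan is to obtain Corollary 2.3 as an immediate application of Theorem 2.2 with the control function $\varphi(x,y) := \delta\,[\alpha(\|x\|)+\alpha(\|y\|)]$; the whole task reduces to checking that this $\varphi$ satisfies the three hypotheses of the theorem — condition (2.1), the existence condition (2.2), and the extra condition guaranteeing uniqueness.

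First I would record two elementary facts. Setting $t=0$ in (i) gives $\alpha(0)\le\alpha(|k|)\alpha(0)$; since (ii) and $|k|\le1$ force $0\le\alpha(|k|)<|k|^3\le1$, this yields $\alpha(0)=0$. Iterating (i) gives $\alpha(|k|^n t)\le\alpha(|k|)^n\alpha(t)$ for all $n\in\N$ and $t\ge0$. Since the scalar field carries the valuation $|\cdot|$ we have $\|k^n x\|=|k|^n\|x\|$, so for all $x,y\in G$
$$\frac{\varphi(k^n x,k^n y)}{|k|^{3n}}\le\delta\left(\frac{\alpha(|k|)}{|k|^3}\right)^{n}\bigl[\alpha(\|x\|)+\alpha(\|y\|)\bigr],$$
and because $\alpha(|k|)/|k|^3<1$ by (ii), the right-hand side tends to $0$; this is (2.1).

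Next I would analyse the nested maxima. Using $\alpha(0)=0$ together with the estimate just derived, $\varphi(k^j x,0)/|k|^{3j}\le\delta\,r^{j}\alpha(\|x\|)$ where $r:=\alpha(|k|)/|k|^3\in[0,1)$, a non-increasing geometric sequence. Hence for each fixed $x$ the sequence $n\mapsto\max\{\varphi(k^j x,0)/|k|^{3j}:0\le j<n\}$ is non-decreasing in $n$ and bounded above by its $j=0$ term $\delta\alpha(\|x\|)=\varphi(x,0)$; therefore it converges and in fact $\tilde\varphi(x)=\delta\alpha(\|x\|)$, which verifies (2.2). Theorem 2.2 then yields a cubic mapping $C:G\to X$ with $\|f(x)-C(x)\|\le\frac1{|2k^3|}\tilde\varphi(x)=\frac1{|2k^3|}\delta\alpha(\|x\|)$, i.e. (2.8). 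For uniqueness, the same geometric bound gives $\max\{\varphi(k^j x,0)/|k|^{3j}:i\le j<n+i\}\le\delta\,r^{i}\alpha(\|x\|)$ for every $n$, so its limit as $n\to\infty$ and then $i\to\infty$ is $0$, and the uniqueness clause of Theorem 2.2 applies.

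I do not expect a genuine obstacle: all the substance sits in Theorem 2.2, and what remains is routine submultiplicativity bookkeeping. The only points demanding a little care are the derivation of $\alpha(0)=0$ and the identity $\|k^n x\|=|k|^n\|x\|$, which together make the "$\alpha(\|y\|)$"-term vanish at $y=0$; without $\alpha(0)=0$ the bound would carry an extra additive constant and the clean form (2.8) would be lost.
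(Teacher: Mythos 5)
Your proposal is correct and follows essentially the same route as the paper: specialize Theorem 2.1 to $\varphi(x,y)=\delta[\alpha(\|x\|)+\alpha(\|y\|)]$, verify (2.1) via the geometric ratio $\alpha(|k|)/|k|^3<1$, identify $\tilde\varphi(x)=\varphi(x,0)$, and check the uniqueness condition by the same geometric decay. You actually supply a detail the paper leaves implicit, namely that $\alpha(0)=0$ (forced by (i), (ii) and $|k|\le 1$), which is needed for $\varphi(x,0)$ to reduce to $\delta\alpha(\|x\|)$ in (2.8).
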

\begin{proof}
Defining $\varphi:G \times G \to [0,\infty)$ by
$\varphi(x,y):=\delta [\alpha (\|x\|)+\alpha (\|y\|)]$ we have
$$\lim_{n \to \infty} \frac{\varphi(k^nx,k^ny)}{|k|^{3n}} \leq \lim_{n \to \infty}(\frac{\alpha(|k|)}{|k|^3})^{n} \varphi(x,y)=0$$
for all $x,y \in G$ and all $k \in \Bbb N.$ We have
$$\tilde{\varphi}(x)=\lim_{n \to \infty} \max \{ \frac{\varphi(k^jx,0)}{|k|^{3j}}:~0 \leq j < n
\}=\varphi(x,0)$$ and
$$\lim_{i \to \infty} \lim_{n \to \infty} \max \{ \frac{\varphi(k^jx,0)}{|k|^{3j}}:~ i \leq j < n+i \}= \lim_{i \to \infty} \frac{\varphi(k^ix,0)}{|k|^{3i}}=0$$
for all $x \in G$ and all $k \in \Bbb N.$
\end{proof}
\begin{rk}
The classical example of the function $\alpha$ is the mapping
$\alpha(t)=t^p$ for all $t \in [0, \infty),$ where $p>3$ with the
further assumption that $|k|<1.$
\end{rk}
\begin{rk}
We can formulate similar statements to Theorem $2.1$ in which we
can define the sequence $C(x):=\lim_{n \to \infty}k^{3n}
f(\frac{x}{k^n})$ under suitable conditions on the function
$\varphi$ then obtain similar result to Corollary $2.2$ for $p<3.$
\end{rk}
\vskip .2in
\section {{\Large\bf} Stability of the quartic functional equation}
In this section, we prove the generalized Hyers-Ulam-Rassias
stability of the quartic functional equation $(1.6)$. Throughout
this section, we assume that $G$ is an additive group and $X$ is a
complete non-Archimedean space. Now before taking up the main
subject, given $f:G \to X,$ we define the difference operator
$$\Delta f(x,y)=f(kx+y)+f(kx-y)=k^2[f(x+y)+f(x-y)]+2k^2(k^2-1)f(x)-2(k^2-1)f(y)$$ for all
$x,y \in G$ and all $k \in \Bbb N.$ we consider the following
functional inequality:
$$\|\Delta f(x,y)\| \leq \psi(x,y)$$
for an upper bound $\psi:G \times G \to [0, \infty).$
\begin{thm}
Let $\psi:G \times G \to [0,\infty)$ be a function such that
$$\lim_{n \to \infty} \frac{\psi(k^nx,k^ny)}{|k|^{4n}}=0 \eqno(3.1)$$
for all $x,y \in G,$ $k \in \Bbb N$ and let for each $x \in G$ the
limit
$$\lim_{n \to \infty} \max \{\frac{\psi(k^jx,0)}{|k|^{4j}}:~0 \leq j < n \},\eqno(3.2)$$
denoted by $\tilde{\psi}(x),$ exists. Suppose that $f:G \to X$ is
a mapping satisfying $f(0)=0$ and
$$\|\Delta f(x,y)\| \leq \psi(x,y) \eqno(3.3)$$
for all $x,y \in G.$ Then there exists a quartic mapping $Q:G \to
X$ such that
$$\|Q(x)-f(x)\| \leq \frac{1}{|2k^4|} \tilde{\psi}(x) \eqno(3.4)$$
for all $x \in G,$ if $$\lim_{i \to \infty} \lim_{n \to \infty}
\max \{\frac{\psi(k^jx,0)}{|k|^{4j}}:~i \leq j < n+i \}=0$$ then
$Q$ is the unique quartic mapping satisfying $(3.4).$
\end{thm}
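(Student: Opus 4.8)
The plan is to mirror, step by step, the argument already carried out for the cubic equation in Theorem 2.1, with the exponent $3$ replaced by $4$ throughout. First I would substitute $y=0$ into the defining inequality $(3.3)$. Since $f(0)=0$, the term $-2(k^2-1)f(0)$ vanishes, and the quartic identity at $y=0$ reads $2f(kx)=2k^2 f(x)+2k^2(k^2-1)f(x)=2k^4 f(x)$, so $(3.3)$ yields
$$\left\|\frac{1}{k^4}f(kx)-f(x)\right\|\le\frac{1}{|2k^4|}\,\psi(x,0)$$
for all $x\in G$ and $k\in\mathbb{N}$. Replacing $x$ by $k^{n-1}x$ and dividing by $|k|^{4(n-1)}$ gives the one-step estimate
$$\left\|\frac{f(k^nx)}{k^{4n}}-\frac{f(k^{n-1}x)}{k^{4(n-1)}}\right\|\le\frac{\psi(k^{n-1}x,0)}{|2k^{4n}|}.$$

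Next, using the strong triangle inequality and hypothesis $(3.1)$, the right-hand side tends to $0$, so the sequence $\{k^{-4n}f(k^nx)\}$ has successive differences going to zero; by the non-Archimedean Cauchy criterion recalled after Definition 1.1, it is Cauchy, and completeness of $X$ lets me define $Q(x):=\lim_{n\to\infty}k^{-4n}f(k^nx)$. An easy induction on $n$, again exploiting the ultrametric inequality to replace a sum of bounds by their maximum, establishes
$$\left\|\frac{f(k^nx)}{k^{4n}}-f(x)\right\|\le\frac{1}{|2k^4|}\max\left\{\frac{\psi(k^ix,0)}{|k|^{4i}}:0\le i<n\right\};$$
letting $n\to\infty$ and invoking $(3.2)$ produces the desired bound $(3.4)$. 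To see that $Q$ is quartic, I would apply $(3.3)$ with $x,y$ replaced by $k^nx,k^ny$, divide by $|k|^{4n}$, and pass to the limit using $(3.1)$, which forces $\|\Delta Q(x,y)\|=0$, i.e.\ $Q$ satisfies $(1.6)$. Finally, for uniqueness, if $Q'$ is another quartic mapping satisfying $(3.4)$, then quarticity gives $Q(k^ix)=k^{4i}Q(x)$ and likewise for $Q'$, so
$$\|Q(x)-Q'(x)\|=|k|^{-4i}\|Q(k^ix)-Q'(k^ix)\|\le|k|^{-4i}\max\{\|Q(k^ix)-f(k^ix)\|,\|f(k^ix)-Q'(k^ix)\|\},$$
and the extra hypothesis $\lim_{i\to\infty}\lim_{n\to\infty}\max\{\psi(k^jx,0)/|k|^{4j}:i\le j<n+i\}=0$ drives this to $0$, giving $Q=Q'$.

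The only genuinely new point compared with the cubic proof is the algebraic reduction at $y=0$: one must check that the coefficient of $f(kx)$ collapses to exactly $k^4$, which is why the hypothesis $f(0)=0$ is imposed (it kills the stray $f(y)$ term that has no analogue in the cubic equation). Everything else—the Cauchy estimate, the inductive bound, the limit transfer of the functional identity, and the uniqueness argument—is a routine transcription of Theorem 2.1 with $|k|^{3n}$ replaced by $|k|^{4n}$, so I do not anticipate any real obstacle there; the main care needed is bookkeeping with the homogeneity factor $k^{4i}$ in the uniqueness step.
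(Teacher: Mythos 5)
Your proposal follows the paper's own proof essentially verbatim: the same substitution $y=0$ (with the same correct use of $f(0)=0$ to eliminate the $f(y)$ term and collapse the coefficient to $2k^4$), the same non-Archimedean Cauchy criterion, the same inductive max-bound, and the same uniqueness argument via $Q(k^ix)=k^{4i}Q(x)$. No issues.
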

\begin{proof}
Putting $y=0$ in $(3.3),$ we get
$$\|\frac{1}{k^4}f(kx)-f(x)\| \leq \frac{1}{|2k^4|} \psi(x,0)\eqno(3.5)$$
for all $x \in G$ and all $k \in \Bbb N.$ Replacing $x$ by
$k^{n-1}x$ in $(3.5),$ we get
$$\|\frac{1}{k^{4n}}f(k^nx)-\frac{1}{k^{4(n-1)}}f(k^{n-1}x)\| \leq \frac{\psi(k^{n-1}x,0)}{|2k^{4n}|}\eqno(3.6)$$
for all $x \in G$ and all $k \in \Bbb N.$ It follows from $(3.6)$
and $(3.1)$ that the sequence $\{ \frac{f(k^nx)}{k^{4n}} \}$ is
Cauchy. Since $X$ is complete, we conclude that $\{
\frac{f(k^nx)}{k^{4n}} \}$ is convergent. Set $Q(x):=\lim_{n \to
\infty} \frac{f(k^nx)}{k^{4n}}.$ From the inequality (3.5) we use
iterative methods and induction on $n$ to prove our next relation:
$$\| \frac{f(k^nx)}{k^{4n}}-f(x)\| \leq  \frac{1}{|2k^4|} \max
\{\frac{\psi(k^ix,0)}{|k|^{4i}}:~ 0 \leq i < n \} \eqno(3.7)$$ for
all $n \in \Bbb N$ and all $x \in G.$ By taking $n$ to approach
infinity in $(3.7)$ and using $(3.2)$, we  obtain $(3.4).$ By
$(3.1)$ and $(3.3),$ we get
$$\|\Delta Q(x,y)\|=\lim_{n \to \infty}\frac{1}{|k^{4n}|}\|f(k^nx,k^ny)\| \leq \lim_{n \to \infty} \frac{\psi(k^nx,k^ny)}{|k|^{4n}}=0$$
for all $x,y \in G$ and all $k \in \Bbb N.$ Therefore the mapping
$Q:G \to X$ satisfies $(1.6).$ To prove the uniqueness property of
$Q$, let  $Q^{'}$ be  another quartic mapping satisfies  $(3.4),$
then
\begin{align*}
\|Q(x)-Q^{'}(x)\|&=\lim_{i \to
\infty}|k|^{-4i}\|Q(k^ix)-Q^{'}(k^ix)\|\\
& \leq \lim_{i \to \infty} |k|^{-4i} \max \{~
\|Q(k^ix)-f(k^ix)\|,\|f(k^ix)-Q^{'}(k^ix)\|~ \}\\
&\leq \frac{1}{|2k^4|} \lim_{i \to \infty} \lim_{n \to \infty}
\max \{ \frac{\psi(k^jx,0)}{|k|^{4j}}:~i \leq j < n+i \}\\
&=0.
\end{align*}
for all $x \in G$ and all $k \in \Bbb N.$ Therefore $Q=Q^{'}.$
This completes the proof of the uniqueness of $Q.$
\end{proof}
\begin{cor}
Let $\beta:[0,\infty) \to [0,\infty)$ be a function satisfying\\
$(i)$ $\beta(|k|t) \leq \beta(|k|) \beta(t)$ for all $t \geq 0,$\\
$(ii)$ $\beta(|k|) < |k|^4.$\\ Let $\delta > 0,$ let $G$ be a
normed space and let $f:G \to X$ fulfill the inequality
$$\|\Delta f(x,y)\| \leq \delta [\beta (\|x\|)+\beta (\|y\|)]$$ for all
$x,y \in G.$ Then there exists a unique quartic mapping $Q:G \to
X$ such that
$$\|f(x)-Q(x)\| \leq \frac{1}{|2k^4|} \delta \beta(\|x\|) \eqno(3.8)$$ for all $x \in G$ and all $k \in \Bbb N.$
\end{cor}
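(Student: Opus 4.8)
The plan is to obtain this corollary as a direct specialization of Theorem 3.1, taking the control function
$$\psi(x,y):=\delta\,[\beta(\|x\|)+\beta(\|y\|)].$$
Since Theorem 3.1 requires $f(0)=0$, I would first record that $\beta(0)=0$: putting $t=0$ in condition (i) gives $\beta(0)\leq\beta(|k|)\beta(0)$, and because $|k|\leq 1$ forces $|k|^4\leq 1$, condition (ii) gives $\beta(|k|)<1$, so $\beta(0)=0$. Hence $\psi(0,0)=0$, and the hypothesis $\|\Delta f(0,0)\|\leq\psi(0,0)$ yields $\Delta f(0,0)=0$; as $\Delta f(0,0)=-2k^2(k^2-1)f(0)$ and the integer $2k^2(k^2-1)$ is nonzero for $k\geq 2$ (hence of nonzero valuation), this gives $f(0)=0$.

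Next I would verify the three conditions of Theorem 3.1 for this $\psi$. Iterating (i) gives $\beta(|k|^nt)\leq\beta(|k|)^n\beta(t)$, and together with $\|k^nx\|=|k|^n\|x\|$ this yields the contraction estimate
$$\frac{\psi(k^nx,k^ny)}{|k|^{4n}}\leq\Big(\frac{\beta(|k|)}{|k|^4}\Big)^{n}\psi(x,y),$$
so (3.1) holds by (ii). The same estimate with $y=0$ shows that the term $\psi(k^jx,0)/|k|^{4j}$ is largest at $j=0$, so the maximum defining $\tilde\psi(x)$ in (3.2) equals $\psi(x,0)$ for every $n$, giving $\tilde\psi(x)=\psi(x,0)=\delta\,\beta(\|x\|)$. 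For the same reason the maximum over $i\leq j<n+i$ equals the $j=i$ term, whence
$$\lim_{i\to\infty}\lim_{n\to\infty}\max\Big\{\frac{\psi(k^jx,0)}{|k|^{4j}}:\,i\leq j<n+i\Big\}=\lim_{i\to\infty}\frac{\psi(k^ix,0)}{|k|^{4i}}=0,$$
the last equality because $\psi(k^ix,0)/|k|^{4i}\leq(\beta(|k|)/|k|^4)^{i}\psi(x,0)\to 0$.

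Having checked all hypotheses, Theorem 3.1 then supplies a unique quartic mapping $Q:G\to X$ with $\|f(x)-Q(x)\|\leq\frac{1}{|2k^4|}\tilde\psi(x)=\frac{1}{|2k^4|}\,\delta\,\beta(\|x\|)$, which is exactly (3.8). I do not expect a genuine obstacle here: once $f(0)=0$ is secured, the argument is a routine instantiation of Theorem 3.1, entirely parallel to the deduction of Corollary 2.2 from Theorem 2.1. The two places needing slight care are the reduction of the nested maxima in (3.2) and in the uniqueness hypothesis to their leading terms — which relies on condition (i) together with the strict contraction $\beta(|k|)<|k|^4$ of (ii) — and the preliminary verification that $\beta(0)=0$, so that the standing assumption $f(0)=0$ of Theorem 3.1 is actually met.
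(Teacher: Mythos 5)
Your proof is correct and follows essentially the same route as the paper: specialize Theorem 3.1 to $\psi(x,y)=\delta[\beta(\|x\|)+\beta(\|y\|)]$ and verify (3.1), the identification $\tilde\psi(x)=\psi(x,0)$, and the uniqueness hypothesis via the contraction $\beta(|k|)/|k|^4<1$. Your preliminary check that $\beta(0)=0$ and hence $f(0)=0$ is a detail the paper's proof silently omits, and it is a welcome addition since Theorem 3.1 formally assumes $f(0)=0$.
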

\begin{proof}
Putting $\psi(x,y):=\delta [\beta(\|x\|)+\beta(\|y\|)]$ in above
theorem. We have
$$\lim_{n \to \infty} \frac{\psi(k^nx,k^ny)}{|k|^{4n}} \leq \lim_{n \to \infty}(\frac{\beta(|k|)}{|k|^4})^{n} \psi(x,y)=0$$
for all $x,y \in G$ and all $k \in \Bbb N.$ We have
$$\tilde{\psi}(x)=\lim_{n \to \infty} \max \{ \frac{\psi(k^jx,0)}{|k|^{4j}}:~0 \leq j < n
\}=\psi(x,0)$$ and
$$\lim_{i \to \infty} \lim_{n \to \infty} \max \{ \frac{\psi(k^jx,0)}{|k|^{4j}}:~ i \leq j < n+i \}= \lim_{i \to \infty} \frac{\psi(k^ix,0)}{|k|^{4i}}=0$$
for all $x \in G$ and all $k \in \Bbb N.$
\end{proof}
\begin{rk}
The classical example of the function $\beta$ is the mapping
$\beta(t)=t^p$ for all $t \in [0, \infty),$ where $p>4$ with the
further assumption that $|k|<1.$
\end{rk}
\begin{rk}
We can formulate similar statements to Theorem $3.1$ in which we
can define the sequence $Q(x):=\lim_{n \to \infty}k^{4n}
f(\frac{x}{k^n})$ under suitable conditions on the function $\psi$
then obtain similar result to Corollary $3.2$ for $p<4.$
\end{rk}

{\small

}
\end{document}